\documentclass[a4paper,fleqn]{cas-sc}
\usepackage[authoryear,longnamesfirst]{natbib}
\usepackage{algorithm}
\usepackage{algpseudocode}
\usepackage{amsmath,amssymb}
\usepackage{amsthm}
\usepackage{subcaption}
\usepackage{caption}
\newtheorem{assumption}{Assumption}
\newtheorem{lemma}{Lemma}
\newtheorem{theorem}{Theorem}

\newtheorem{example}{Example}
\newtheorem{definition}{Definition}
\def\tsc#1{\csdef{#1}{\textsc{\lowercase{#1}}\xspace}}
\tsc{WGM}
\tsc{QE}

\usepackage{enumitem}
\usepackage{titlesec}
\titleformat{\section}  {\normalfont\bfseries\fontsize{12pt}{14pt}\selectfont}
  {\thesection.}{0.8em}{}
\titlespacing*{\section}{0pt}{10pt plus 2pt minus 2pt}{8pt plus 2pt minus 2pt}
\DeclareMathSizes{10}{8.5}{6.5}{5} 
\usepackage{setspace}
\begin{document}
\let\WriteBookmarks\relax
\def\floatpagepagefraction{1}
\def\textpagefraction{.001}
\shorttitle{}    
\shortauthors{}  
\title [mode = title]{A greedy double-block nonlinear Bregman-Kaczmarz method for nonlinear equations }  
\tnotemark[1] 
\tnotetext[1]{} 
\author[ ]{Wendi Bao}
\cormark[1]
\fnmark[1]
\ead{baowd@upc.edu.cn}
\credit{}
\author[ ]{Jing Li}
\fnmark[2]
\author[ ]{Lili Xing}
\author[ ]{Weiguo Li}
\credit{}
\affiliation[]{organization={College of Science},
            addressline={ China University of Petroleum}, 
            city={Qingdao},
            postcode={266580}, 
            state={P.R.},
            country={China}}
\cortext[1]{Corresponding author}
\fntext[1]{}
\begin{keywords}
Nonlinear equations, column-blocked method, Gauss-Seidel method,  Bregman distance
\end{keywords}
\maketitle
\begin{abstract}
In this paper, we develop a  greedy double-block nonlinear Bregman-Kaczmarz method to efficiently find sparse solutions of nonlinear equations. The theoretical convergence rate of the proposed method is established under certain assumptions. Numerical experiments illustrate the competitive performance and superiority of the proposed method.
\end{abstract}
\section{Introduction}\label{sec1}

Sparse solution recovery for the constrained nonlinear optimization problem
\begin{equation}\label{eq:1.2}
\mathop{\min}_{x\in \mathcal{D}}\varphi(x),\quad \mathrm{s.t.} \quad F(x)=0,
\end{equation}
where $\varphi:\mathbb{R}^n\to \overline{\mathbb{R}}$ is a convex, nonsmooth sparsity-promoting function with nonempty domain $\mathrm{dom}\,\varphi=\{x\in \mathbb{R}^n:\varphi(x)<\infty\}$, which arises in many practical applications;
see \cite{Wang21,Zhang23,Lv24,Ye24,Bao25,Jiang25}.
By integrating the Kaczmarz iteration into the Bregman scheme, the sparse Kaczmarz method was established in \cite{Lorenz14_1,Lorenz14_2}.
The Bregman method \cite{Yin08} was originally developed to solve the regularized basis pursuit problem
\begin{equation}
\mathop{\min}_{x\in\mathbb{R}^n}\lambda\|x\|_1+\frac{1}{2}\|x\|_2^2\quad \mathrm{s.t.}\quad Ax=b,
\end{equation}
which provides an effective framework for solving the sparse solutions of linear systems.  Based on this framework, numerous improved variants have been proposed to accelerate convergence. 

However, research on sparse solution recovery for nonlinear systems remains relatively underdeveloped. In this context, Gower \cite{Gower24} proposed the nonlinear Bregman-Kaczmarz (NBK) and relaxed nonlinear Bregman-Kaczmarz (rNBK) methods by embedding Bregman projections into the nonlinear Kaczmarz (NK) framework. To accelerate convergence, Xiao et al. \cite{Xiao25} developed the averaged fast block nonlinear Bregman-Kaczmarz with adaptive stepsize (ABNBK‑a) method for sparse nonlinear signal recovery, which is a row-block variant of NBK. Inspired by the work \cite{Xiao25}, we propose the greedy double-block nonlinear Bregman-Kaczmarz (GDBNK) method for solving sparse solutions to nonlinear equations.


Notation used throughout this paper is summarized below. For any matrix $A\in \mathbb{R}^{m\times n}$, $\| \cdot\|_2$, $\sigma_{\min}$, $\sigma_{\max}$ denote the Euclidean norm, the minimum and maximum nonzero singular values of a matrix $A$, respectively. Let $\nabla F(x)$ be the Jacobian matrix of $F(x)$, 
 $\widehat{x}$ be a solution of the nonlinear system
$F(x)=0$ such that
$F(\widehat{x})=0$.
For an index set
$I_k\subseteq\{1,2,\ldots,m\}$,
we denote by $F_{I_k}(x)$ the subvector of $F(x)$ indexed by $I_k$.
For an index set
$J_k\subseteq\{1,2,\ldots,n\}$,
$\nabla F_{I_k,J_k}(x)$ denotes the submatrix of the Jacobian matrix
$\nabla F(x)$ whose row and column indices are determined by
$I_k$ and $J_k$, respectively.
Let
$I_{:,J_k}\in\mathbb{R}^{n\times |J_k|}$
denote the column-selection matrix consisting of the columns of the identity matrix $I_n$ indexed by $J_k$,  then we have
   $\nabla F_{I_k,J_k}(x) =
    \nabla F_{I_k,:}(x)I_{:,J_k}.$   Let $\mathbb{E}_k[\cdot]$ denote the conditional expectation with
respect to the random choice of $J_k$, conditioned on all
information available before selecting $J_k$ at the $k$th
iteration.

The rest of this paper is organized as follows.  We propose the GDBNK method in Section \ref{sec2} and conduct its convergence analysis in Section \ref{sec3}. Numerical experiments are carried out in Section \ref{sec4} to validate the performance of the proposed method. Finally,  we
draw the conclusions in Section \ref{sec5}.

\section{Greedy double-block nonlinear Bregman-Kaczmarz method}\label{sec2}
First, we present key assumption, difinition  and lemmas required throughout this paper.



\begin{definition}
\label{def:tcc}
\cite{Kaltenbacher08} Let $F(x)$ be a continuously differentiable mapping. Then $F(x)$ satisfies the
tangential cone condition in $\Omega\subset \mathbb{R}^n$ if there exists $0<\eta<1$ such that for any $x_1,x_2\in\Omega$,
\setlength{\abovedisplayskip}{4pt plus 2pt minus 2pt}
\setlength{\belowdisplayskip}{4pt plus 2pt minus 2pt}
\begin{equation} \label{def422}
|F(x_1)-F(x_2)-\nabla F(x_1)(x_1-x_2)|\leq\eta|F(x_1)-F(x_2)|.
\end{equation}
\end{definition}
	
	\begin{assumption}\label{asm1}
The functions $F(x)$ and $\varphi(x)$ in \eqref{eq:1.2} satisfy the following assumptions.
\begin{itemize}[noitemsep,topsep=0pt,parsep=0pt,partopsep=0pt]	
	\item[(i)] The nonlinear function $F(x):\mathcal{D}\subseteq\mathbb{R}^n\to\mathbb{R}^m$ on a bounded closed $\mathcal{D}$ is derivable and the derivative of $F(x)$ is continuous in $\mathcal{D}$.
	\item[(ii)] $F(x)$ satisfies the tangential cone condition.
	\item[(iii)] $\varphi(x):\mathbb{R}^n\to \mathbb{R}$ is proper, convex and lower semicontinuous.
	\item[(iv)] $\nabla F(\hat{x})$ is full column rank matrix.	
\end{itemize}
\end{assumption}
\begin{lemma}
\label{lem:1+eta^2}
\cite{Zhang24} If the nonlinear function $F(x)$ satisfies the local tangential cone condition in $\mathcal{D}$, then for all $x_1, x_2 \in \mathcal{D}$ and an index subset $\tau\subseteq [m]$, we have
\setlength{\abovedisplayskip}{4pt plus 2pt minus 2pt}
\setlength{\belowdisplayskip}{4pt plus 2pt minus 2pt}
\begin{equation}\label{eq:block ltcc}
\frac{1}{(1+\eta)^2}\|F_{\tau,:}^\prime(x_1)(x_1-x_2)\|_2^2\leq \|F_{\tau,:}(x_1)-F_{\tau,:}(x_2)\|_2^2\leq\frac{1}{(1-\eta)^2}\|F_{\tau,:}^\prime(x_1)(x_1-x_2)\|_2^2.
\end{equation}
\end{lemma}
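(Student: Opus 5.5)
We apply the tangential cone condition of Definition~\ref{def:tcc} to the rows indexed by $\tau$ and then use the triangle inequality twice. We interpret the absolute values in \eqref{def422} as holding componentwise (or in the Euclidean norm on each subvector). Either way, restricting to the rows in $\tau$ gives, for all $x_1,x_2\in\mathcal{D}$,
\begin{equation*}
\|F_{\tau,:}(x_1)-F_{\tau,:}(x_2)-F^\prime_{\tau,:}(x_1)(x_1-x_2)\|_2\leq \eta\,\|F_{\tau,:}(x_1)-F_{\tau,:}(x_2)\|_2 .
\end{equation*}
In the componentwise reading this follows by squaring each component inequality and summing over $i\in\tau$. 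In the normwise reading we assume the condition is imposed blockwise, as the paper's phrase ``local tangential cone condition'' for subsets suggests. This reduction is the only point needing care: if the condition were stated only for the full vector in norm, it would not transfer to arbitrary blocks. So I will state explicitly that it is taken row-wise, as is standard in \cite{Zhang24}.

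Write $a=F_{\tau,:}(x_1)-F_{\tau,:}(x_2)$ and $b=F^\prime_{\tau,:}(x_1)(x_1-x_2)$, so that $\|a-b\|_2\le\eta\|a\|_2$.

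For the upper bound, the triangle inequality gives $\|a\|_2\le\|a-b\|_2+\|b\|_2\le\eta\|a\|_2+\|b\|_2$. Hence $(1-\eta)\|a\|_2\le\|b\|_2$. Since $0<\eta<1$, dividing by $1-\eta>0$ and squaring yields $\|a\|_2^2\le \frac{1}{(1-\eta)^2}\|b\|_2^2$.

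For the lower bound, the triangle inequality gives $\|b\|_2\le\|b-a\|_2+\|a\|_2\le(1+\eta)\|a\|_2$. Squaring yields $\frac{1}{(1+\eta)^2}\|b\|_2^2\le\|a\|_2^2$.

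Combining the two bounds gives \eqref{eq:block ltcc}. No further obstacle arises beyond the blockwise validity of the cone condition discussed above.
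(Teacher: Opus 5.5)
Your proof is correct and is essentially the standard argument; the paper gives no proof of its own for this lemma and only cites \cite{Zhang24}. Under the componentwise (``local'') reading of \eqref{def422}, squaring and summing over $i\in\tau$ and then applying the two triangle-inequality estimates yields exactly \eqref{eq:block ltcc}. The only difference from the usual per-component route, $(1-\eta)|F^{(i)}(x_1)-F^{(i)}(x_2)|\le|\nabla F^{(i)}(x_1)(x_1-x_2)|\le(1+\eta)|F^{(i)}(x_1)-F^{(i)}(x_2)|$ followed by summing squares, is that you aggregate before applying the triangle inequality. Your remark that a cone condition imposed only on the full vector in norm would not transfer to arbitrary blocks is accurate.
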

		
		
		

\begin{lemma}\label{lem5}
Assume that Assumption \ref{asm1} (i) holds, 
then  there exist constants
$\underline{\nu} $ and $\overline{\nu}$
such that
\setlength{\abovedisplayskip}{4pt plus 2pt minus 2pt}
\setlength{\belowdisplayskip}{4pt plus 2pt minus 2pt}
\begin{equation}
    \inf_{x\in \mathcal{D}}\sigma_{\min}
\left(
\nabla F_{I_k,J_k}(x)
\right)
=
\underline{\nu}>0,
\qquad
 \sup_{x\in \mathcal{D}}\sigma_{\max}
\left(
\nabla F_{I_k,J_k}(x)
\right)
=
\overline{\nu}<\infty.
\end{equation}
\end{lemma}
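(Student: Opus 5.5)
The plan rests on continuity and compactness. By Assumption \ref{asm1} (i), the Jacobian $\nabla F$ is continuous on the bounded closed, hence compact, set $\mathcal{D}$. For fixed index sets $I_k,J_k$, the submatrix map $x\mapsto \nabla F_{I_k,J_k}(x)=I_{I_k,:}\nabla F(x)I_{:,J_k}$ is therefore continuous, since it is just a coordinate selection. Because only finitely many pairs $(I_k,J_k)$ are possible, uniform constants can be taken as the minimum or maximum over these finitely many pairs.

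\emph{The upper bound.} For every $x\in\mathcal{D}$,
\begin{equation*}
\sigma_{\max}\left(\nabla F_{I_k,J_k}(x)\right)=\|\nabla F_{I_k,J_k}(x)\|_2\le \|\nabla F(x)\|_2 .
\end{equation*}
The function $x\mapsto\|\nabla F(x)\|_2$ is continuous on a compact set, so it attains a finite maximum. Hence $\overline{\nu}\le\max_{x\in\mathcal{D}}\|\nabla F(x)\|_2<\infty$.

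\emph{The lower bound.} I would use that singular values depend continuously on the matrix entries (Weyl's inequality $|\sigma_i(A)-\sigma_i(B)|\le\|A-B\|_2$). Consequently the smallest singular value $\sigma_{r}$, with $r=\min(|I_k|,|J_k|)$, is a continuous function of $x$. If this value is positive at every $x\in\mathcal{D}$, compactness gives a positive minimum, and $\underline{\nu}>0$ follows.

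\emph{Main obstacle.} The difficulty is the positivity itself. Continuity of $\nabla F$ alone cannot ensure that the submatrix has full rank, for example when $F$ is constant. The quantity "minimum \emph{nonzero} singular value" is also not continuous in general, since it can jump when the rank drops.

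To close this gap, I would invoke Assumption \ref{asm1} (iv): $\nabla F(\widehat{x})$ has full column rank. It follows that $\sigma_{\min}(\nabla F(\widehat{x}))>0$. By continuity, full column rank persists on a neighborhood of $\widehat{x}$, and I would take $\mathcal{D}$ to lie within that neighborhood, which fits the local setting of the method.

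Full column rank of $\nabla F$ does not transfer to arbitrary row subsets $I_k$, however. I would therefore add the natural requirement that the selected blocks $\nabla F_{I_k,J_k}(x)$ have full rank, or equivalently a constant rank, on $\mathcal{D}$. For column subsets this is automatic, because a column submatrix of a full-column-rank matrix keeps full column rank. Under constant rank, the smallest nonzero singular value is continuous, and the compactness argument of the lower bound goes through.
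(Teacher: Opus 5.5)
The paper states Lemma \ref{lem5} without any proof, so there is no argument of the authors to compare with. Your upper bound is correct and complete from Assumption \ref{asm1}(i) alone: $\sigma_{\max}(\nabla F_{I,J}(x))=\|\nabla F_{I,J}(x)\|_2\le\|\nabla F(x)\|_2$, continuity on the compact set $\mathcal{D}$, and a maximum over the finitely many admissible pairs give $\overline{\nu}<\infty$.

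Your diagnosis of the lower bound is also right, and you can state it more strongly. The lower bound is false under Assumption \ref{asm1}(i), and indeed under all of Assumption \ref{asm1}, so no argument can avoid an extra hypothesis. Your constant-$F$ example only shows that full rank can fail. Under the paper's convention that $\sigma_{\min}$ is the smallest \emph{nonzero} singular value, the zero matrix does not by itself falsify the lemma.

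A concrete counterexample: take $m=n=3$, $\tau=2$, $\mathcal{D}=[1,\tfrac32]\times[-1,1]^2$ and $F(x)=(x_2/x_1,\;x_1-\tfrac54,\;x_3)^T$. One checks that $F_1(a)-F_1(b)-\nabla F_1(a)(a-b)=\frac{a_1-b_1}{a_1}\bigl(F_1(a)-F_1(b)\bigr)$, and the other components are affine. Hence the tangential cone condition holds with $\eta=\tfrac12$, componentwise and therefore for every row block. Also $\nabla F(\widehat{x})$ is nonsingular at $\widehat{x}=(\tfrac54,0,0)^T$.

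For $I=J=\{1,3\}$ the block is $\nabla F_{I,J}(x)=\mathrm{diag}(-x_2/x_1^2,\,1)$. It is never zero, yet its smallest nonzero singular value is $|x_2|/x_1^2$ for $x_2\neq0$ and jumps to $1$ at $x_2=0$. So the infimum is $0$, exhibiting exactly the discontinuity you mention. The algorithm actually uses this pair: at $x=(\tfrac54,\tfrac54t,t)^T$ one has $F(x)=(t,0,t)^T$, so the greedy rule selects $I_k=\{1,3\}$ arbitrarily close to $\widehat{x}$. This also confirms your remark that Assumption \ref{asm1}(iv) does not pass to row blocks.

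Your repair works. If every admissible block has constant rank $r\ge1$ on $\mathcal{D}$, Weyl's inequality makes $x\mapsto\sigma_r(\nabla F_{I,J}(x))$ continuous and positive on the compact set $\mathcal{D}$. A minimum over the finitely many pairs then gives $\underline{\nu}>0$. Three corrections follow.

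\begin{itemize}
\item Full rank and constant rank are not equivalent; full rank is only the special case $r=\min(|I|,\tau)$.
\item Once the block hypothesis is imposed, the detour through Assumption \ref{asm1}(iv) and shrinking $\mathcal{D}$ does no work and can be dropped.
\item If you want the hypothesis that is natural for how the lemma is used, look at the proof of Theorem \ref{thm:main-convergence}. In \eqref{eq15} and in the $\delta^2$ term it needs $\|\nabla F_{I_k,J_k}^{T}(x_k)F_{I_k}(x_k)\|_2\ge\underline{\nu}\|F_{I_k}(x_k)\|_2$. Constant rank $r<|I_k|$ does not give this, since $F_{I_k}(x_k)$ may have a component in $\ker\nabla F_{I_k,J_k}^{T}(x_k)$. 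The assumption that actually serves the paper is full row rank, $\sigma_{|I|}(\nabla F_{I,J}(x))\ge\underline{\nu}>0$ on $\mathcal{D}$ for all admissible pairs, which in particular forces $|I_k|\le\tau$.
\end{itemize}
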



To accelerate convergence and improve computational efficiency, we update only a subset of Jacobian columns instead of the full column set. Accordingly, we develop the greedy double-block nonlinear Bregman-Kaczmarz (GDBNK)  method to recover sparse solutions to nonlinear equations, as outlined in Algorithm \ref{alg3}.

\begin{algorithm}[htbp]
\caption{\textbf{ A greedy double-block nonlinear Bregman-Kaczmarz (GDBNK)  method }}\label{alg3}
\begin{algorithmic}[1]
\Require $x_0^* \in \mathbb{R}^n$, $x_0 = \nabla\varphi^*(x_0^*)$, $\gamma>0$, $\delta>0$, $\theta\in(0,1]$, $1<\tau < n$, $l>0$
\For{$k=0,1,\dots,l-1$}
    \State Select greedy row set $I_k=\bigl\{i_k \mid |F^{(i_k)}(x_k)|^2 \ge \theta\cdot\max_{1\le i \le m}|F^{(i)}(x_k)|^2\bigr\}$
    \State Randomly select column index set $J_k\subset\{1,2,\dots,n\}$ uniformly at random with $|J_k|=\tau$
    \State Update $x_{k+1}^*=x_{k}^*-\delta\frac{\gamma\|F_{I_k}(x_k)\|_2^2}{\|\nabla F_{I_kJ_k}^T(x_k)F_{I_k}(x_k)\|_2^2}I_{:,J_k}\nabla F_{I_kJ_k}^T(x_k)F_{I_k}(x_k)$
    \State Update $x_{k+1}=\nabla\varphi^{*}(x_{k+1}^*)$
    \State Compute residual: $r_{k+1}=-F(x_{k+1})$
\EndFor
\State \Return $x_l$
\end{algorithmic}
\end{algorithm}
\section{Convergence analysis}\label{sec3}
\begin{theorem}
\label{thm:main-convergence}
Suppose that Assumption \ref{asm1} holds
and $\varphi(x)$ is $\gamma$-strongly convex and $M$-smooth with respect to $\|\cdot\|_2$. Let  $\underline{\nu}$ and $\overline{\nu}$ be defined as in Lemma \ref{lem5}.  The sequence $\{x_k\}$ and $\{x_k^*\}$ are generated by Algorithm \ref{alg3}.
Assume that
\begin{equation}\label{23}
  \frac{1-\eta}{\overline{\nu}^{\,2}}
-
\left(
\frac{1}{\underline{\nu}^{\,2}}
-
\frac{1}{\overline{\nu}^{\,2}}
\right)
\frac{
\overline{\nu}(1+\eta)\sqrt{m}
}{
\sqrt{\theta}\,
\sigma_{\min}
\bigl(
\nabla F(\widehat{x})
\bigr)
}
>0,  
\end{equation}
if parameter $\delta$ satisfies
\begin{align}
\label{24}
0<\delta
<
2\underline{\nu}^{\,2}\frac{\tau}{n}
\Bigg[
&
\frac{1-\eta}{\overline{\nu}^{\,2}}
-\left(
\frac{1}{\underline{\nu}^{\,2}}
-
\frac{1}{\overline{\nu}^{\,2}}
\right)
\frac{
\overline{\nu}(1+\eta)\sqrt{m}
}{
\sqrt{\theta}\,
\sigma_{\min}
\bigl(
\nabla F(\widehat{x})
\bigr)
}
\Bigg],
\end{align}
then there exists a constant $c>0$ such that
\begin{equation}
\label{eq:linear-bregman-convergence}
\mathbb{E}
\left[
D_{\varphi}^{x_k^{*}}
(x_k,\widehat{x})
\right]
\leq
(1-c)^k
D_{\varphi}^{x_0^{*}}
(x_0,\widehat{x}),
\end{equation}
where
\begin{equation}\label{256}
    c
=
\frac{
\gamma\delta\theta
\sigma_{\min}^{2}
\bigl(
\nabla F(\widehat{x})
\bigr)
}{
mM(1+\eta)^2
}
\times
\Bigg[
\frac{\tau}{n}
\frac{1-\eta}{\overline{\nu}^{\,2}}
-
\frac{\tau}{n}
\left(
\frac{1}{\underline{\nu}^{\,2}}
-
\frac{1}{\overline{\nu}^{\,2}}
\right)
\frac{
\overline{\nu}(1+\eta)\sqrt{m}
}{
\sqrt{\theta}\,
\sigma_{\min}
\bigl(
\nabla F(\widehat{x})
\bigr)
}
-
\frac{\delta}{2\underline{\nu}^{\,2}}
\Bigg].
\end{equation}
In particular, if $0<c<1$, then the sequence generated by
Algorithm \ref{alg3} converges linearly to $\widehat{x}$. 
Furthermore,  if $x_0^*=x_0=0$,  the error between  $x_k$  and the solution $\hat{x}$ satisfies: 
\begin{equation}\label{eq111}
\mathbb{E}
\left[
\left\|
x_k-\widehat{x}
\right\|_2^{2}
\right]
\leq
\frac{2}{\gamma}
(1-c)^k
D_{\varphi}^{x_0^{*}}
(x_0,\widehat{x}).
\end{equation}
\end{theorem}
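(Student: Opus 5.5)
We follow the standard Bregman--Kaczmarz template of \cite{Gower24,Xiao25}: expand the Bregman distance along one step, bound it in conditional expectation over $J_k$, and close the recursion with a lower bound on the residual. Write $d_k=I_{:,J_k}\nabla F_{I_kJ_k}^T(x_k)F_{I_k}(x_k)$ and $t_k=\delta\gamma\|F_{I_k}(x_k)\|_2^2/\|\nabla F_{I_kJ_k}^T(x_k)F_{I_k}(x_k)\|_2^2$, so that $x_{k+1}^*=x_k^*-t_kd_k$. The starting point is the three-point identity for the Bregman distance together with $\gamma$-strong convexity of $\varphi$, which gives $1/\gamma$-smoothness of $\varphi^*$:
\begin{equation*}
D_{\varphi}^{x_{k+1}^*}(x_{k+1},\widehat{x})\le D_{\varphi}^{x_k^*}(x_k,\widehat{x})-t_k\langle d_k,x_k-\widehat{x}\rangle+\frac{t_k^2}{2\gamma}\|d_k\|_2^2 .
\end{equation*}
Since $\|d_k\|_2^2=\|\nabla F_{I_kJ_k}^T F_{I_k}\|_2^2$, the quadratic term equals $\frac{\delta^2\gamma}{2}\|F_{I_k}\|_2^4/\|\nabla F_{I_kJ_k}^TF_{I_k}\|_2^2$. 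Lemma~\ref{lem5} bounds this by $\frac{\delta^2\gamma}{2\underline{\nu}^2}\|F_{I_k}\|_2^2$, which produces the $-\delta/(2\underline\nu^2)$ term in \eqref{256}.

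For the linear term we use $\langle d_k,x_k-\widehat{x}\rangle=\langle F_{I_k}(x_k),\nabla F_{I_k,J_k}(x_k)(x_k-\widehat{x})_{J_k}\rangle$. The idea is to split it into the full-column contribution minus an error, and to handle the step size $t_k$ by writing $1/\|\nabla F_{I_kJ_k}^TF_{I_k}\|^2=1/\overline\nu^2+\big(1/\|\cdot\|^2-1/\overline\nu^2\big)$, where the bracket lies in $[0,\,1/\underline\nu^2-1/\overline\nu^2]$.
\begin{itemize}
\item On the $1/\overline\nu^2$ part, taking $\mathbb{E}_k$ over uniform $J_k$ yields the factor $\tau/n$ times $\langle F_{I_k},\nabla F_{I_k,:}(x_k)(x_k-\widehat{x})\rangle$. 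The tangential cone condition with $F_{I_k}(\widehat{x})=0$ bounds this below by $(1-\eta)\|F_{I_k}(x_k)\|_2^2$. This is the $\frac{\tau}{n}\frac{1-\eta}{\overline\nu^2}$ term.
\item The remainder part is bounded in absolute value by $(1/\underline\nu^2-1/\overline\nu^2)\,\|F_{I_k}\|_2\,\overline\nu\,\|x_k-\widehat{x}\|_2$, after again taking $\mathbb{E}_k$ to get $\tau/n$ where the column selection appears.
\end{itemize}

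To make the remainder proportional to $\|F_{I_k}\|_2^2$, we bound $\|x_k-\widehat{x}\|_2$ by the residual. Assumption~\ref{asm1}(iv) and Lemma~\ref{lem:1+eta^2} (with roles of $x_1,x_2$ arranged so the Jacobian is at $\widehat x$) give $\sigma_{\min}(\nabla F(\widehat x))\|x_k-\widehat x\|_2\le(1+\eta)\|F(x_k)\|_2$. The greedy rule gives $\|F(x_k)\|_2^2\le m\max_i|F^{(i)}|^2\le \frac{m}{\theta}\|F_{I_k}(x_k)\|_2^2$. Together these yield $\|x_k-\widehat x\|_2\le\frac{(1+\eta)\sqrt m}{\sqrt\theta\,\sigma_{\min}}\|F_{I_k}\|_2$, which is exactly the constant in \eqref{23}. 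Collecting terms gives
\begin{equation*}
\mathbb{E}_k D_{\varphi}^{x_{k+1}^*}(x_{k+1},\widehat x)\le D_{\varphi}^{x_k^*}(x_k,\widehat x)-\delta\gamma\,[\cdots]\,\|F_{I_k}(x_k)\|_2^2 ,
\end{equation*}
where $[\cdots]$ is the bracket in \eqref{256}, which is positive by \eqref{23}--\eqref{24}.

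It remains to lower-bound $\|F_{I_k}\|_2^2$ by the Bregman distance. By the greedy rule, $\|F_{I_k}\|_2^2\ge\frac{\theta}{m}\|F(x_k)\|_2^2$. Lemma~\ref{lem:1+eta^2} gives $\|F(x_k)\|_2^2\ge\frac{1}{(1+\eta)^2}\sigma^2_{\min}(\nabla F(\widehat x))\|x_k-\widehat x\|_2^2$. $M$-smoothness of $\varphi$ gives $\|x_k-\widehat x\|_2^2\ge\frac{2}{M}D_\varphi^{x_k^*}(x_k,\widehat x)$. Combining these gives the contraction factor $1-c$ with $c$ as in \eqref{256}, up to the constant factor $2$ that can be absorbed. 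Taking full expectation and iterating then gives \eqref{eq:linear-bregman-convergence}. Finally, \eqref{eq111} follows from $D_\varphi^{x^*}(x,\widehat x)\ge\frac{\gamma}{2}\|x-\widehat x\|_2^2$. The hypothesis $x_0^*=x_0=0$ only fixes the initial distance.

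The main obstacle is the remainder term. The random column restriction makes $\langle d_k,x_k-\widehat x\rangle$ not sign-definite, and it is coupled with the random step size, so we cannot take expectations of the product directly. The plan is to apply Cauchy--Schwarz before taking expectations, and use the greedy-row/tangential-cone estimate to convert $\|x_k-\widehat x\|$ to $\|F_{I_k}\|$. This also requires applying Lemma~\ref{lem:1+eta^2} with the Jacobian evaluated at $\widehat x$ and $x_k$, which implicitly needs the Jacobian bound uniformly near $\widehat x$. We will treat that as provided by the tangential cone condition on $\mathcal D$.
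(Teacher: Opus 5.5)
Your proposal is correct and follows essentially the same route as the paper: the three-point inequality, the $\tau/n$ averaging identity with the tangential cone condition for the main term, Cauchy--Schwarz for the remainder, and the greedy-row/tangential-cone estimate to convert $\|x_k-\widehat{x}\|_2$ into $\|F_{I_k}(x_k)\|_2$. The differences are only cosmetic: you split the weight $\|F_{I_k}(x_k)\|_2^2/\|\nabla F_{I_k,J_k}^T(x_k)F_{I_k}(x_k)\|_2^2$ into $1/\overline{\nu}^{\,2}$ plus a nonnegative remainder for every $J_k$ instead of splitting by sign into $\mathcal{J}_k^{+}$ and $\mathcal{J}_k^{-}$ (both give the same bound, because the paper enlarges the $\mathcal{J}_k^{-}$ sum to all $J_k$ in its Cauchy--Schwarz step), and you use the sharper $D_{\varphi}^{x_k^*}(x_k,\widehat{x})\le\frac{M}{2}\|x_k-\widehat{x}\|_2^2$ instead of the paper's $M\|x_k-\widehat{x}\|_2^2$, which yields the factor $1-2c\le 1-c$ and so implies the stated bound.
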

\begin{proof}
Since $\varphi$ is $\gamma$-strongly convex and
$x_k^{*}\in\partial\varphi(x_k)$,
according to Algorithm \ref{alg3}, we  can get that
\begin{align}
\label{eq8}
D_{\varphi}^{x_{k+1}^{*}}
(x_{k+1},\widehat{x})
&
\leq \nonumber
D_{\varphi}^{x_k^{*}}
(x_k,\widehat{x})
+
\left\langle
x_{k+1}^{*}-x_k^{*},
x_k-\widehat{x}
\right\rangle
+
\frac{1}{2\gamma}
\left\|
x_{k+1}^{*}-x_k^{*}
\right\|_2^{2}.\\
&\leq \nonumber
D_{\varphi}^{x_k^{*}}
(x_k,\widehat{x})-
\delta\gamma
\frac{
\left\|F_{I_k}(x_k)\right\|_2^{2}
}{
\left\|
\nabla F_{I_k,J_k}^{T}(x_k)
F_{I_k}(x_k)
\right\|_2^{2}
}
F_{I_k}(x_k)^{T}
\nabla F_{I_k,J_k}(x_k)
I_{:,J_k}^{T}
(x_k-\widehat{x})\\
&+
\frac{\delta^{2}\gamma}{2}
\frac{
\left\|F_{I_k}(x_k)\right\|_2^{4}
}{
\left\|
\nabla F_{I_k,J_k}^{T}(x_k)
F_{I_k}(x_k)
\right\|_2^{2}
}.
\end{align}
 Taking the conditional expectation in
\eqref{eq8}, we obtain
\begin{align}
\mathbb{E}_k
\left[
D_{\varphi}^{x_{k+1}^{*}}
(x_{k+1},\widehat{x})
\right]
\nonumber
&\leq
D_{\varphi}^{x_k^{*}}
(x_k,\widehat{x})
\nonumber-
\frac{\delta\gamma}{\binom{n}{\tau}}
\sum_{|J_k|=\tau}
\frac{
\left\|F_{I_k}(x_k)\right\|_2^{2}
}{
\left\|
\nabla F_{I_k,J_k}^{T}(x_k)
F_{I_k}(x_k)
\right\|_2^{2}
}\nonumber
F_{I_k}(x_k)^{T}
\nabla F_{I_k,J_k}(x_k)
I_{:,J_k}^{T}
(x_k-\widehat{x})
\nonumber\\
&+
\frac{\delta^{2}\gamma}
{2\binom{n}{\tau}}
\sum_{|J_k|=\tau}
\frac{
\left\|F_{I_k}(x_k)\right\|_2^{4}
}{
\left\|
\nabla F_{I_k,J_k}^{T}(x_k)
F_{I_k}(x_k)
\right\|_2^{2}
}.
\label{eq:conditional-expectation}
\end{align}
Now, we define
\begin{align}\label{ff}
\mathcal{J}_k^{+}
=
\Bigl\{
J_k:\;& |J_k|=\tau,
F_{I_k}(x_k)^{T}
\nabla F_{I_k,J_k}(x_k)
I_{:,J_k}^{T}
(x_k-\widehat{x})
>0
\Bigr\},
\end{align}
and
\begin{align}\label{gg}
\mathcal{J}_k^{-}
=
\Bigl\{
J_k:\;& |J_k|=\tau,
F_{I_k}(x_k)^{T}
\nabla F_{I_k,J_k}(x_k)
I_{:,J_k}^{T}
(x_k-\widehat{x})
<0
\Bigr\}.
\end{align}
Using
$\frac{1}{\underline{\nu}^{\,2}}
=
\frac{1}{\overline{\nu}^{\,2}}
+
\left(
\frac{1}{\underline{\nu}^{\,2}}
-
\frac{1}{\overline{\nu}^{\,2}}
\right),$ and combining \eqref{ff} and
\eqref{gg} yields
\begin{align}
&
-
\frac{1}{\binom{n}{\tau}}
\sum_{|J_k|=\tau}
\frac{
\left\|F_{I_k}(x_k)\right\|_2^{2}
}{
\left\|
\nabla F_{I_k,J_k}^{T}(x_k)
F_{I_k}(x_k)
\right\|_2^{2}
}
\nonumber
F_{I_k}(x_k)^{T}
\nabla F_{I_k,J_k}(x_k)
I_{:,J_k}^{T}
(x_k-\widehat{x})
\nonumber\\
\leq&
-
\frac{1}
{\overline{\nu}^{\,2}\binom{n}{\tau}}
\sum_{J_k\in\mathcal{J}_k^{+}}
F_{I_k}(x_k)^{T}
\nabla F_{I_k,J_k}(x_k)
I_{:,J_k}^{T}
(x_k-\widehat{x})
-
\frac{1}
{\underline{\nu}^{\,2}\binom{n}{\tau}}
\sum_{J_k\in\mathcal{J}_k^{-}}
F_{I_k}(x_k)^{T}
\nabla F_{I_k,J_k}(x_k)
I_{:,J_k}^{T}
(x_k-\widehat{x})\nonumber\\
&=-
\frac{1}
{\overline{\nu}^{\,2}\binom{n}{\tau}}
\sum_{|J_k|=\tau}
F_{I_k}(x_k)^{T}
\nabla F_{I_k,J_k}(x_k)
I_{:,J_k}^{T}
(x_k-\widehat{x})\nonumber\\
&+
\frac{1}{\binom{n}{\tau}}
\left(
\frac{1}{\underline{\nu}^{\,2}}
-
\frac{1}{\overline{\nu}^{\,2}}
\right)
\sum_{J_k\in\mathcal{J}_k^{-}}
\left|
F_{I_k}(x_k)^{T}
\nabla F_{I_k,J_k}(x_k)
I_{:,J_k}^{T}
(x_k-\widehat{x})
\right|.
\label{eq15}
\end{align}
Since
$\sum_{|J_k|=\tau}
I_{:,J_k}I_{:,J_k}^{T}
=\binom{n-1}{\tau-1}I_n$,
 and combing the inequality \eqref{eq15}, we can get
\begin{align}
\frac{1}{\binom{n}{\tau}}
\sum_{|J_k|=\tau}
F_{I_k}(x_k)^{T}
\nabla F_{I_k,J_k}(x_k)
I_{:,J_k}^{T}
(x_k-\widehat{x})
&\nonumber
=
\frac{\binom{n-1}{\tau-1}}
{\binom{n}{\tau}}
F_{I_k}(x_k)^{T}
\nabla F_{I_k,:}(x_k)
(x_k-\widehat{x})
\nonumber\\
&=
\frac{\tau}{n}
F_{I_k}(x_k)^{T}
\nabla F_{I_k,:}(x_k)
(x_k-\widehat{x}).
\label{eq:block-average-identity}
\end{align}
Since $F_{I_k}(\widehat{x})=0$, the tangential cone condition
(\ref{def422}) implies
\begin{equation}
\left\|
F_{I_k}(x_k)
-
\nabla F_{I_k,:}(x_k)
(x_k-\widehat{x})
\right\|_2
\leq
\eta
\left\|
F_{I_k}(x_k)
\right\|_2.
\end{equation}
Consequently,
\begin{align}
F_{I_k}(x_k)^{T}
\nabla F_{I_k,:}(x_k)
(x_k-\widehat{x})
\nonumber
&=
\left\|
F_{I_k}(x_k)
\right\|_2^{2}
\nonumber-
F_{I_k}(x_k)^{T}
\Bigl[
F_{I_k}(x_k)
-
\nabla F_{I_k,:}(x_k)
(x_k-\widehat{x})
\Bigr]
\nonumber\\
\geq&
\left\|
F_{I_k}(x_k)
\right\|_2^{2}
\nonumber-
\left\|
F_{I_k}(x_k)
\right\|_2
\left\|
F_{I_k}(x_k)
-
\nabla F_{I_k,:}(x_k)
(x_k-\widehat{x})
\right\|_2
\nonumber\\
\geq&
(1-\eta)
\left\|
F_{I_k}(x_k)
\right\|_2^{2}.
\label{eq:principal-inner-product-lower-bound}
\end{align}
Furthermore, by the Cauchy--Schwarz inequality, we obtain
\begin{align}
&
\frac{1}{\binom{n}{\tau}}
\sum_{J_k\in\mathcal{J}_k^{-}}
\left|
F_{I_k}(x_k)^{T}
\nabla F_{I_k,J_k}(x_k)
I_{:,J_k}^{T}(x_k-\widehat{x})
\right|
\nonumber
\leq
\frac{1}{\binom{n}{\tau}}
\sum_{|J_k|=\tau}
\left\|
\nabla F_{I_k,J_k}^{T}(x_k)
F_{I_k}(x_k)
\right\|_2
\left\|
I_{:,J_k}^{T}(x_k-\widehat{x})
\right\|_2
\nonumber\\
\leq{}&
\left[
\frac{1}{\binom{n}{\tau}}
\sum_{|J_k|=\tau}
\left\|
\nabla F_{I_k,J_k}^{T}(x_k)
F_{I_k}(x_k)
\right\|_2^{2}
\right]^{1/2}
\times
\left[
\frac{1}{\binom{n}{\tau}}
\sum_{|J_k|=\tau}
\left\|
I_{:,J_k}^{T}(x_k-\widehat{x})
\right\|_2^{2}
\right]^{1/2}\nonumber\\
&=\frac{\tau}{n}
\left\|
\nabla F_{I_k,:}^{T}(x_k)
F_{I_k}(x_k)
\right\|_2
\left\|
x_k-\widehat{x}
\right\|_2\leq \frac{\tau}{n}
\overline{\nu}
\left\|
F_{I_k}(x_k)
\right\|_2
\left\|
x_k-\widehat{x}
\right\|_2.
\label{eq22}
\end{align}
By the definition of the greedy row index set $I_k$ in
Algorithm \ref{alg3}, we obtain
\begin{equation}
\left\|
F_{I_k}(x_k)
\right\|_2^{2}
=
\sum_{i\in I_k}
\left|
F^{(i)}(x_k)
\right|^{2}
\geq
\theta
\max_{1\leq i\leq m}
\left|
F^{(i)}(x_k)
\right|^{2}
\geq
\frac{\theta}{m}
\left\|
F(x_k)
\right\|_2^{2}.
\label{eq:greedy-residual-bound}
\end{equation}
Applying the inequality (\ref{eq:block ltcc}), (\ref{eq:greedy-residual-bound}),  $F(\widehat{x})=0$
and $\nabla F(\widehat{x})$ has full column rank, we have
\begin{equation}
\label{eq19}
\left\|
F_{I_k}(x_k)
\right\|_2^{2}
\geq
\frac{
\theta
\sigma_{\min}^{2}
\bigl(
\nabla F(\widehat{x})
\bigr)
}{
m(1+\eta)^2
}
\left\|
x_k-\widehat{x}
\right\|_2^{2}.
\end{equation}
Substitution of \eqref{eq19} into
\eqref{eq22} yields
\begin{align}\label{eqqq}
&
\frac{1}{\binom{n}{\tau}}
\sum_{J_k\in\mathcal{J}_k^{-}}
\left|
F_{I_k}(x_k)^{T}
\nabla F_{I_k,J_k}(x_k)
I_{:,J_k}^{T}(x_k-\widehat{x})
\right|
\leq
\frac{\tau}{n}
\frac{
\overline{\nu}(1+\eta)\sqrt{m}
}{
\sqrt{\theta}\,
\sigma_{\min}
\bigl(
\nabla F(\widehat{x})
\bigr)
}
\left\|
F_{I_k}(x_k)
\right\|_2^{2}.
\end{align}
Under conditions \eqref{23} and
\eqref{24},
combining
\eqref{eq:principal-inner-product-lower-bound},
\eqref{eqqq}
,\eqref{eq:conditional-expectation} and \eqref{eq19}, we can get 
\begin{align}
\mathbb{E}_k
\left[
D_{\varphi}^{x_{k+1}^{*}}
(x_{k+1},\widehat{x})
\right]
\leq
D_{\varphi}^{x_k^{*}}
(x_k,\widehat{x})
-
\gamma\delta
\Bigg[
\frac{\tau}{n}
\frac{1-\eta}{\overline{\nu}^{\,2}}
-
\frac{\tau}{n}
\left(
\frac{1}{\underline{\nu}^{\,2}}
-
\frac{1}{\overline{\nu}^{\,2}}
\right)
\frac{
\overline{\nu}(1+\eta)\sqrt{m}
}{
\sqrt{\theta}\,
\sigma_{\min}
\bigl(
\nabla F(\widehat{x})
\bigr)
}
-
\frac{\delta}{2\underline{\nu}^{\,2}}
\Bigg]
\left\|
F_{I_k}(x_k)
\right\|_2^{2}
\nonumber\\
\leq
D_{\varphi}^{x_k^{*}}(x_k,\widehat{x})
-
\frac{
\gamma\delta\theta
\sigma_{\min}^{2}
\bigl(
\nabla F(\widehat{x})
\bigr)
}{
m(1+\eta)^2
}
\Bigg[
\frac{\tau}{n}\frac{1-\eta}{\overline{\nu}^{\,2}}
-
\frac{\tau}{n}
\left(
\frac{1}{\underline{\nu}^{\,2}}
-
\frac{1}{\overline{\nu}^{\,2}}
\right)
\frac{\overline{\nu}(1+\eta)\sqrt{m}}{\sqrt{\theta}\,\sigma_{\min}\bigl(\nabla F(\widehat{x})\bigr)}
-
\frac{\delta}{2\underline{\nu}^{\,2}}
\Bigg]
\left\|
x_k-\widehat{x}
\right\|_2^{2}.
\label{eq20}
\end{align}
Since $\varphi$ is $M$-smooth \rm\cite{Niu25a}, one has
$D_{\varphi}^{x_k^{*}}
(x_k,\widehat{x})
\leq
M
\left\|
x_k-\widehat{x}
\right\|_2^{2}.$
Thus, the inequality \eqref{eq20} can be rewritten as
\begin{equation}
\label{eq:conditional-contraction}
\mathbb{E}_k
\left[
D_{\varphi}^{x_{k+1}^{*}}
(x_{k+1},\widehat{x})
\right]
\leq
(1-c)
D_{\varphi}^{x_k^{*}}
(x_k,\widehat{x}),
\end{equation}
where $c$ is given by \eqref{256}.
Taking the full expectation on both sides of
\eqref{eq:conditional-contraction} and taking the tower property of
conditional expectation, we obtain
$\mathbb{E}
\left[
D_{\varphi}^{x_{k+1}^{*}}
(x_{k+1},\widehat{x})
\right]
\leq
(1-c)
\mathbb{E}
\left[
D_{\varphi}^{x_k^{*}}
(x_k,\widehat{x})
\right]
\nonumber\leq
(1-c)^{k+1}
D_{\varphi}^{x_0^{*}}
(x_0,\widehat{x}).$
Hence, we have $\mathbb{E}
\left[
D_{\varphi}^{x_k^{*}}
(x_k,\widehat{x})
\right]
\leq
(1-c)^k
D_{\varphi}^{x_0^{*}}
(x_0,\widehat{x}),$
which proves \eqref{eq:linear-bregman-convergence}.
Finally, by the $\gamma$-strong convexity of $\varphi$, we can get $ \frac{\gamma}{2}
\left\|
x_k-\widehat{x}
\right\|_2^{2}
\leq
D_{\varphi}^{x_k^{*}}
(x_k,\widehat{x}),$ which means that  the inequality (\ref{eq111}) is proved.
\end{proof}

\section{Numerical experiments}\label{sec4}
In this section, we present some numerical experiments to verify the  effectiveness of  the GDBNK method  in solving  sparse solutions of nonlinear equations. We compare the GDBNK method with the  NBK, rNBK, and ABNBK-a methods on various nonlinear problems. All experiments were performed with MATLAB (Version R2024a). The computer used
is equipped with a Windows 11 operating system, 16G memory, and a 2.60GHz central
processing unit (13th Gen Intel(R) Core(TM) i7-13650HX).

The index $i$ in the NBK and rNBK methods is selected using uniform probability sampling $\boldsymbol{\mathrm{ones}}(m,1)/m$. For the parameters of different methods, we set $\sigma=1$, $\gamma=1$, $\delta_{ABNBK-a}=1.3$, $\theta=0.1$, $\delta_{GDBNK}=1.2$.  The sparsity level of the sparse solution $\hat{x}$ is denoted by '$sp$'.   The sparsity level $sp$ is 0.05.   $\tau$
 denotes the number of columns selected from the Jacobian matrix 
$\nabla F$
 for updating.
We use $\varphi(x)=\lambda\|x\|_1+\frac{1}{2}\|x\|_2^2$ and set $\lambda=2$. The initial subgradient $x_0^*$ is generated from the standard normal distribution. Then the initial vector $x_0=\nabla\varphi^*(x_0^*)=S_\lambda(x_0^*)$, where \(S_\lambda(x_0^*)\) denotes the soft-thresholding operator.  We use the number of  iteration steps (IT) and  the computing time in seconds (CPU) to measure the effectiveness of these methods. The IT and CPU values are averaged over 10 independent trials. The stopping condition is defined as $\frac{||F(x_k)||_2^2}{||F(x_0)||_2^2}\leq 10^{-6}$ for the relative residual (RES), or when the iteration number reaches 10000.

\begin{example}\label{num1}
We consider solving the sparse solution of quadratic equations
\begin{equation*}
	F_i(x)=\frac{1}{2}\langle x,A^{(i)}x\rangle+\langle b^{(i)},x\rangle+c^{(i)}=0, \quad 1\leq i\leq m
\end{equation*}
with $A^{(i)}\in\mathbb{R}^{n\times n}$, $b^{(i)}\in\mathbb{R}^n$, $c^{(i)}\in\mathbb{R}$. We set 
$	c^{(i)}=-\left(\frac{1}{2}\langle x,A^{(i)}x\rangle+\langle 
b^{(i)},x\rangle\right)$,  $b^{(i)}$ and $\hat{x}$ can be generated by MATLAB function $rand(n,1)$ and $sprandn(n,1,sp)$, respectively. 
 $A^{(i)}$ is a random partial discrete cosine transform (DCT) matrix with j-th column generated through the expression as:
		$A^{(i)}_{:,j}=\cos(2\pi(j-1)\omega), $
	where $\omega\in\mathbb{R}^{n\times1}$ is a column vector with elements sampled uniformly and independently from $[0,1]$.
\end{example}

From Table \ref{tb2} and Figure \ref{fig1}, we observe that the GDBNK method consumes fewer iterations and computation time than the other algorithms, showing its competitive performance.


\begin{figure}
\centering
\begin{subfigure}{0.35\textwidth}
\centering
\includegraphics[width=\linewidth]{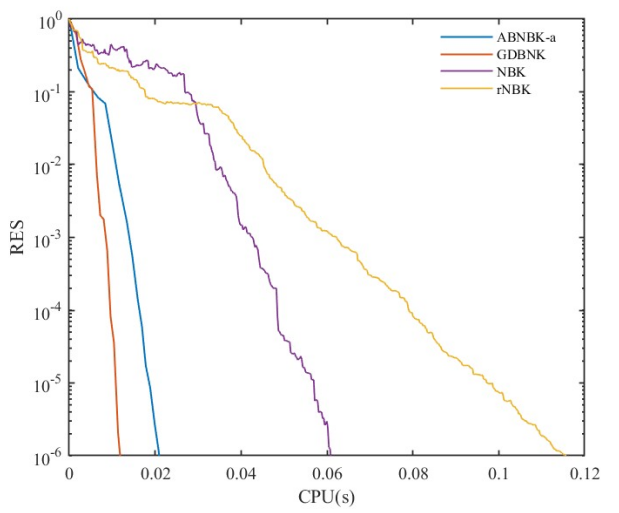}
\caption{$m=100, n=50, \tau=36$}
\label{fig:sub1}
\end{subfigure}
\begin{subfigure}{0.35\textwidth}
\centering
\includegraphics[width=\linewidth]{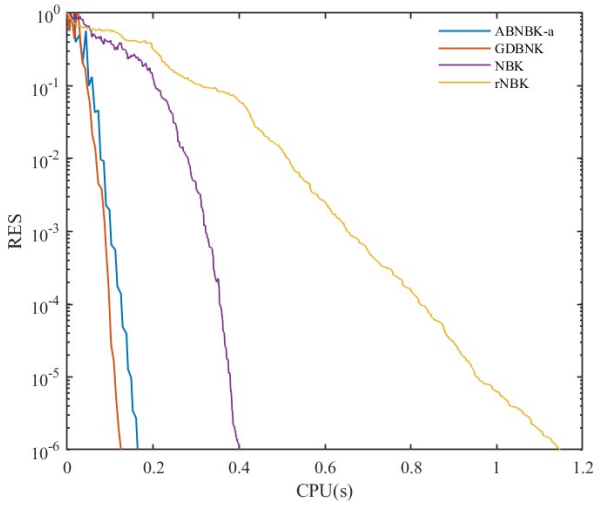}
\caption{$m=200, n=100, \tau=86$}
\label{fig:sub2}
\end{subfigure}
\caption{RES vs IT in Example \ref{num1}}
\label{fig1}
\end{figure}


\begin{table}
\centering
\caption{The average of IT and CPU of the four methods in Example \ref{num1} }\label{tb2}
 \footnotesize
\begin{tabular*}{\tblwidth}{@{}L L L  L L L L L L@{}}
\toprule
m & n  &$\tau$ & & ABNBK-a & GDBNK & NBK & rNBK \\
\midrule
100&50&36&IT&15&\textbf{13}&246&772 \\ 
&&&CPU&0.0218&\textbf{0.0120}&0.0610&0.1157 \\  
200&100&86&IT&25&\textbf{21}&503&1751 \\
&&&CPU&0.1642&\textbf{0.1261}&0.4002&1.1467 \\ 
300&150&135&IT&83&\textbf{59}&1366&5095  \\
&&&CPU&1.2513&\textbf{0.8804}&3.2751&11.4126\\ 
400&200&178&IT&41&\textbf{33}&1289&6834\\
&&&CPU&1.2331&\textbf{0.8756}&6.7922&35.0517\\ 
600&300&278&IT&38&\textbf{30}&3664&8094\\
&&&CPU&2.7148&\textbf{2.2444}&56.3121&125.5761\\ 
1000&500&450&IT&38&\textbf{31}&4251&>10000\\
&&&CPU&7.8956&\textbf{6.4268}&272.0773&>642.9233\\ 
\bottomrule
\end{tabular*}
\end{table}



\begin{figure} 
	\centering
\includegraphics[width=6.0in]{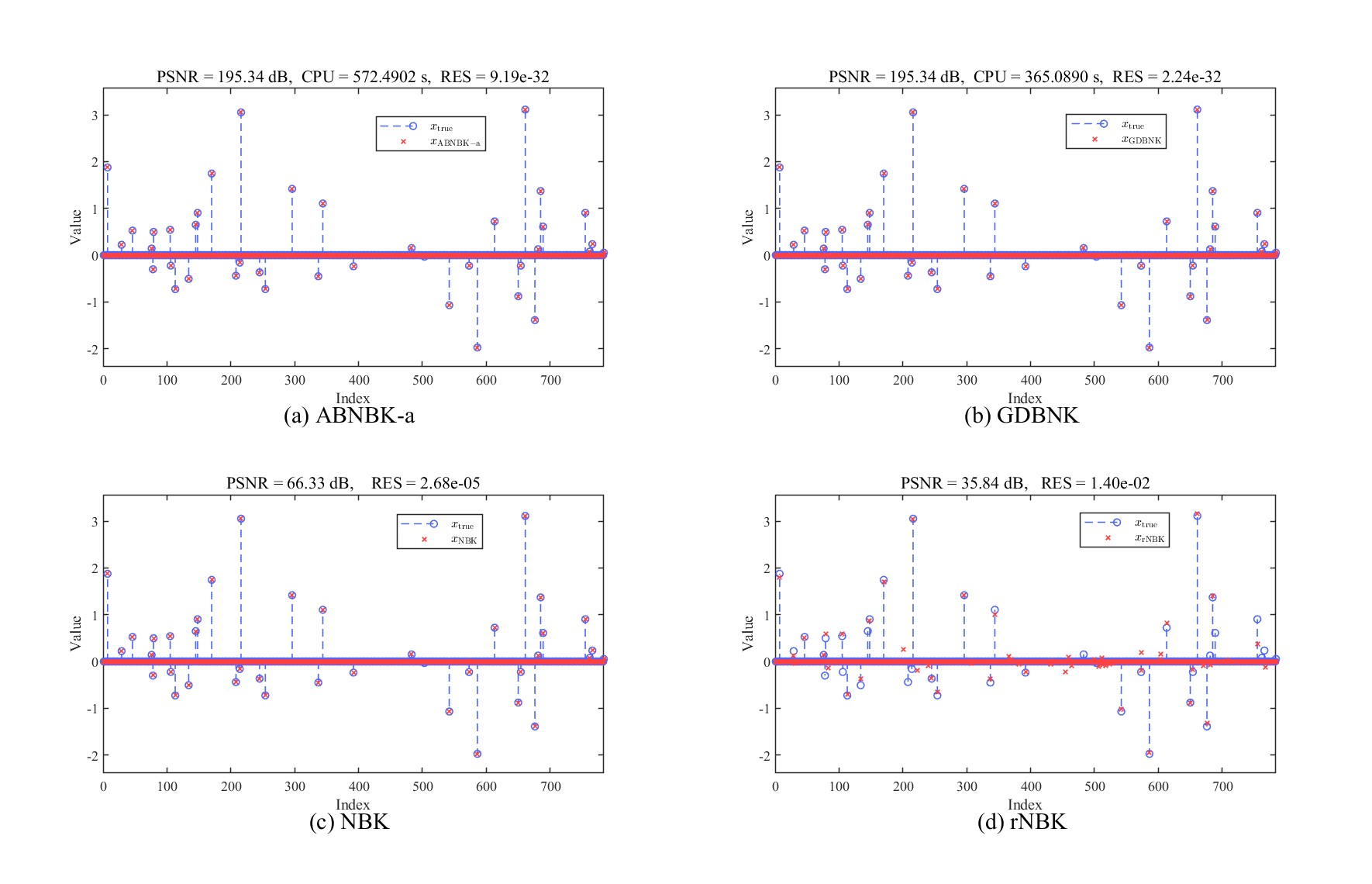}
	\caption[d]{ The original signal and recovered signal of four methods for $A^{(i)} $ sampled from the standard normal distribution  in Example \ref{333} with $m=2000, n=28^2, \tau=740$
	}\label{fig5}
\end{figure}



\begin{example}\label{333}
    We consider recovering the MNIST image recovery problem. The original signal is a $28\times28$ image $X_t$ in the MNIST dataset. Let the coefficient matrix $A$ be generated by the MATLAB function rand(m,n) with $m=2000$ and $n=28^2$. We reshape the original image matrix to get the vector $\hat{x}$. The right-hand side vector is $b=A\hat{x}+e$, where $e$ is a Gaussian noise vector with a noise level of 0.01. We use the peak signal-to-noise ratio (PSNR) to measure the quality of the recovered image, which is defined as
    \begin{equation*}
        PSNK(X_t,X_r)=10*log_{10}\frac{N^2max(X_t(i,j))^2}{\sum_{i=1}^N\sum_{j=1}^N|X_t(i,j)-X_r(i,j)|^2},
    \end{equation*}
    where $X_r$ is the recovered image. The initial subgradient $x_0^*$ is set to be a zero vector. 
\end{example}

Figure \ref{fig5} illustrates the results obtained after 3000 iterations of the NBK, rNBK, ABNBK-a, and GDBNK methods. 
The results show that the ABNBK-a and GDBNK methods achieve identical PSNR and residual level after 3000 iterations, both at 195.34 dB, which is considerably higher than the PSNR values of the NBK and rNBK methods. However, the GDBNK method requires significantly less CPU computation time than the ABNBK-a method.

\section{Conclusions}\label{sec5}
In this paper, we develop the GDBNK method for finding sparse solutions to nonlinear equations. Under appropriate assumptions, we analyze the convergence properties of the proposed method. Numerical experiments show that the GDBNK method outperforms  NBK, rNBK, and ABNBK-a methods in terms of IT and CPU time, which verifies its efficiency. Although the GDBNK algorithm yields a notable acceleration, it is still undetermined how to determine the range of the $\tau$ value
 and set associated parameters to achieve optimality, which will be our future research direction.
\section*{Declaration of Generative AI and AI-assisted technologies in the writing process}
We have not used Artificial Intelligence (AI) tools in the creation of this article.

\section*{Funding}
This work was supported by the Shandong Provincial Natural Science Foundation [grant numbers ZR2026MS0051] and the Fundamental Research Funds for the Central Universities of China [grant number 26CX03017A].

\section*{Data availability}
No data was used for the research described in the article.
\bibliographystyle{unsrt}
\bibliography{cas-refs}
\end{document}